\numberwithin{equation}{section}
\theoremstyle{plain}
\newtheorem{theorem}{Theorem}
\newtheorem{lemma}[theorem]{Lemma}
\newtheorem{corollary}[theorem]{Corollary}
\newtheorem{definition}[theorem]{Definition}
\newcommand{\N}{\mathbb{N}}
\newcommand{\R}{\mathbb{R}}
\newcommand{\eps}{\epsilon}
\newcommand{\inv}{^{-1}}
\newcommand{\vp}{\varphi}
\newcommand{\F}{\mathscr{F}}
\newcommand{\G}{\mathscr{G}}
\newcommand{\floor}[1]{\left\lfloor #1\right\rfloor}
\newcommand{\Pf}{\mathscr{P}_f}
\newcommand{\Pa}{\mathscr{P}_{q,a}}
\newcommand{\A}{\mathscr{A}}
\newcommand{\del}{\delta}
\newcommand{\B}{\mathcal{B}}
\newcommand{\ve}{\varepsilon}
\newcommand{\spmod}{\hspace{-8pt}\pmod}
\theoremstyle{remark}
\newtheorem*{remark}{Remark}
\newtheorem*{example}{Example}
\numberwithin{theorem}{section} \numberwithin{equation}{section}
\begin{document}
\title[Strings of Special Primes in Arithmetic Progressions] {Strings of Special Primes in Arithmetic Progressions}

\author{Keenan Monks} 

\author{Sarah Peluse} 

\author{Lynnelle Ye} 

\address{Keenan Monks, 73 N James St, Hazleton, PA 18201}
\email{monks@harvard.edu}
\address{Sarah Peluse, 491 Parkview Terrace, Buffalo Grove, IL 60089}
\email{peluse@uchicago.edu}
\address{Lynnelle Ye, P.O. Box 16820, Stanford, CA 94309}
\email{lynnelle@stanford.edu}
\thanks{The authors are grateful to the NSF's support of the REU at Emory University.} 

\begin{abstract}
The Green-Tao Theorem, one of the most celebrated theorems in modern number theory, states that there exist arbitrarily long arithmetic progressions of prime numbers. In a related but different direction, a recent theorem of Shiu proves that there exist arbitrarily long strings of consecutive primes that lie in any arithmetic progression that contains infinitely many primes. Using the techniques of Shiu and Maier, this paper generalizes Shiu's Theorem to certain subsets of the primes such as primes of the form $\floor{\pi n}$ and some of arithmetic density zero such as primes of the form $\floor{n\log\log n}$.
\end{abstract}
\maketitle

\section{Introduction and statement of results}
In 1837, Dirichlet famously proved that every arithmetic progression of the form $\{a+qn\}_{n\in\N}$ where $(q,a)=1$ contains infinitely many primes.  This was the first major accomplishment in the direction of relating the primes to arithmetic progressions, a problem that has generated interest for hundreds of years. As early as 1770, Waring and Lagrange investigated the size of common differences in arithmetic progressions of primes, and in 1939 van der Corput \cite{Van} proved that there are infinitely many arithmetic progressions of length three in the primes. 

More recently, Gowers \cite{Gowers} used harmonic analysis to reprove Szemer\'edi's Theorem regarding arithmetic progressions in subsets of the integers. Green and Tao \cite{Green} then generalized his methods (see also the expository article by Kra \cite{Kra}) to prove that there are arbitrarily long arithmetic progressions within the primes; i.e.~for any positive integer $k$, there exist $a,q$ so that $$\{a+q,a+2q,\dotsc,a+kq\}$$ are all prime. 

Here we consider another natural question: are there arbitrarily long strings of consecutive primes all of which lie in the same arithmetic progression? That is, given a positive integer $k$ and $a,q$ with $(q,a)=1$, if $p_n$ is the $n$th prime, can we find $n$ so that $$p_{n+1}\equiv p_{n+2}\equiv\dotsb\equiv p_{n+k}\equiv a\spmod{q}?$$ 

In a remarkable paper of Shiu \cite{Shiu}, the question is answered. By adapting the work of Maier\footnotemark  (see \cite{Maier},~\cite{Gran}), Shiu is able to obtain a favorable distribution of primes in any arithmetic progression among the rows of a ``Maier matrix,'' whose rows are relatively small intervals of integers. Using this, he obtains a lower bound on the length of the longest such string of primes within some row of the matrix, a bound which approaches infinity as the matrix grows.

\footnotetext{This was generalized by Thorne to function fields in his Ph.D thesis \cite{Thorne}.}

In this paper, we exploit Shiu's novel method to prove analogous results about well-distributed subsets of the primes. In particular, we prove the following:
\begin{theorem}\label{Main}
If the set of primes $\A$ is {\bf \emph{well-distributed}} (see Section~\ref{prelims} for the definition,) then there exist arbitrarily long strings of consecutive primes in $\A$ all of which lie in any given progression $\{a+qn\}$ with $(q,a)=1$.
\end{theorem}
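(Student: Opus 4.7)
The plan is to adapt the Maier--Shiu matrix argument to the restricted set $\A$. Fix a target length $k$ and an arithmetic progression $a\pmod q$ with $(q,a)=1$. Following Shiu, I would consider the matrix whose $R$-th row is the short interval $\{qRP+r : 1\le r\le y\}$, where $P$ is the product of primes in a carefully chosen range depending on $k$, the length $y$ grows slowly with the matrix parameters, and $R$ varies over a long interval. The construction forces any prime $p=qRP+r$ in the matrix to satisfy $(r,qP)=1$, so the primes in each row are confined to certain admissible columns; in particular, for each admissible column one knows the residue of $p$ modulo $q$, and residues $\equiv a\pmod q$ can be identified in advance.

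The argument then rests on two estimates: (i) a lower bound, averaged over $R$, for the number of primes $p\in\A$ in each row with $p\equiv a\pmod q$; and (ii) an upper bound, uniform in $R$, for the number of primes in the row not in $\A$ or not congruent to $a\pmod q$. For (i) I would invoke the well-distributed hypothesis on $\A$ in progressions modulo $qP$, combined with the Maier-style boost that produces intervals carrying substantially more than the expected number of primes in a fixed residue class. For (ii) a Brun--Titchmarsh-type bound suffices, since only a crude upper bound is needed.

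By pigeonhole, some single row then contains $N$ primes of $\A$ that are congruent to $a\pmod q$, with $N\to\infty$ as the matrix grows, while the number of ``bad'' primes in the same row (those either not in $\A$ or not $\equiv a\pmod q$) is $o(N)$. Since any consecutive pair of good primes can be separated only by bad primes, there must exist a substring of consecutive primes of length tending to infinity with $N$, all simultaneously in $\A$ and in $\{a+qn\}$. Letting the matrix parameters grow then produces strings of the desired arbitrary length.

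The main obstacle is securing (i). The precise notion of ``well-distributed'' (deferred to Section~\ref{prelims}) must be strong enough to survive the Maier matrix averaging: one needs $\A$ to equidistribute in residue classes modulo $qP$ for $P$ as large as a power of $\log x$, with the implicit constants uniform enough to interact with the Maier boost. Verifying that the axiomatic hypothesis really delivers this, and that typical rows realize the correct density simultaneously with the expected Maier-style prime count, is where the bulk of the technical work lies. Once (i) is in hand, the remainder of the proof is a direct translation of Shiu's combinatorial pigeonhole argument from the set of all primes to $\A$.
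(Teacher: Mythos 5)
Your proposal misreads what must be proved, and this misreading corrupts the whole strategy. The theorem asks for strings of consecutive \emph{elements of $\A$} lying in the progression, i.e., sets of the form $\{p_\A(n),\dots,p_\A(n+k-1)\}$ (see the definition in Section~\ref{prelims}). You are instead aiming for strings of consecutive \emph{ordinary primes} that all happen to be in $\A$ and in the progression; this is why your step (ii) tries to bound ``primes in the row not in $\A$ or not congruent to $a\pmod q$.'' That stronger target is not what is claimed, and for the density-zero examples covered by Theorem~\ref{leitsimple} it is wildly out of reach: such a row may contain many primes outside $\A$, and no bound on them is possible or needed. The only ``bad'' objects that can interrupt a run of good primes in $\A$ are primes that \emph{are} in $\A$ but not congruent to $a\pmod q$. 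The paper's $P_1$ and $P_2$ are both subsets of $M\cap\A$, and the entire combinatorics is about the ratio of good-to-bad primes \emph{inside $\A$}.

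Even with the correct notion of ``bad,'' your plan to get a uniform-in-$R$ per-row upper bound via Brun--Titchmarsh does not work. The rows of the Maier matrix are intervals of length $yz$, which is tiny compared to $Q$ (indeed $Q\asymp X^{1/D(X)}$ while $yz$ is roughly polylogarithmic in $X$). There is no unconditional per-row prime-counting upper bound at this scale; that inaccessibility is the whole reason the Maier matrix is used. The paper circumvents this by counting primes \emph{down the columns} (arithmetic progressions mod $Q$, where the well-distributed hypothesis applies) to bound the \emph{totals} $\#P_1$ and $\#P_2$ over the entire matrix, and then runs a two-case pigeonhole: either some row already containing a bad prime has the desired good/bad ratio $\gg \#P_1/\#P_2\gg \#S/\#T$, or at least half of $P_1$ lies in rows with no bad primes at all, in which case one such row alone carries $\gg Q\#P_1/X$ consecutive good primes. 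You cannot shortcut this dichotomy with a uniform row bound. Relatedly, the ``Maier boost'' here is not about producing rows with abnormally many primes; it is about engineering $Q$ (the choice of the prime set $\Pa$ in Equation~\ref{Qdivs}) so that the number of admissible columns in the good residue class, $\#S$, dominates the number in the bad classes, $\#T$, with the counts ultimately controlled by Lemma~\ref{scriptS} and Lemma~\ref{smooth}.
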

For an example of what Theorem 1.1 implies, consider the set of $\pi$-Beatty primes 
\begin{eqnarray*}
\B_{\pi}&=&\{p\text{ prime}: p = \floor{\pi},\floor{2\pi},\floor{3\pi},\dotsc\}\\
&=&\{3,31,37,43,47,53,59,97,\cdots\}.
\end{eqnarray*} Write $\B_{\pi}=\{p_{\B_{\pi}}(1)<p_{\B_{\pi}}(2)<\dotsb\}$, where $p_{\B_{\pi}}(n)$ is the $n$th smallest prime in $\B_{\pi}$. Then for any length $k$, modulus $q$ and remainder $a$ where $(q,a)=1$, we can find some $n$ for which
\[
p_{\B_{\pi}}(n+1)\equiv p_{\B_{\pi}}(n+2)\equiv\dotsb\equiv p_{\B_{\pi}}(n+k)\equiv a\spmod q.
\]
For example, the first string of six consecutive primes in $\B_{\pi}$ all congruent to $5\pmod{7}$ are
$$26402437, 26402507, 26402591, 26402843, 26402899, 26402927.$$

We can generalize this result as follows for irrational numbers of \textit{finite type}. An irrational number $\alpha$ is of finite type if 
\[
\sup\{r\in\mathbb{R}:\liminf_{n\in\N}n^r\|n\alpha\|=0\}
\]
is finite where $\|r\|=\min_{n\in\mathbb{Z}}|r-n|.$ Almost every real number is of finite type (\cite{Khin}), in particular all of the irrational real algebraic numbers (\cite{Roth1},\cite{Roth2}).

\begin{corollary}
\label{beatty}
For any irrational $\alpha>0$ of finite type, there exist arbitrarily long strings of consecutive primes in $\B_{\alpha}=\{\floor{\alpha},\floor{2\alpha},\floor{3\alpha},\cdots\}$ all of which lie in any progression $\{a+qn\}$ with $(q,a)=1$.
\end{corollary}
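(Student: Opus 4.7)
The plan is to deduce Corollary~\ref{beatty} from Theorem~\ref{Main} by verifying that, for any irrational $\alpha>1$ of finite type, the set $\A = \B_{\alpha} \cap \{\text{primes}\}$ satisfies the well-distribution hypothesis of Section~\ref{prelims}. The starting point is the classical characterization of Beatty membership: for $\alpha>1$ irrational, $m \in \B_{\alpha}$ if and only if $\{-m/\alpha\} \in (0,\,1/\alpha)$. Consequently, for any coprime $q,a$ and any interval $[x,x+y]$,
\[
\#\bigl(\A \cap [x,x+y] \cap \{a+qn\}\bigr) \;=\; \sum_{\substack{x < p \le x+y \\ p \equiv a\,(q)}} \mathbf{1}_{(0,\,1/\alpha)}\bigl(\{-p/\alpha\}\bigr).
\]

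First, I would smooth the indicator $\mathbf{1}_{(0,\,1/\alpha)}$ using the Erd\H{o}s--Tur\'an inequality, approximating it by a trigonometric polynomial of degree at most $K$ whose Fourier coefficients $c_k$ satisfy $|c_k| \ll 1/|k|$, at the cost of a pointwise error of size $O(1/K)$. After expanding, the count above splits into a main term $\tfrac{1}{\alpha}\cdot\#\{x<p\le x+y:\,p\equiv a\spmod{q}\}$ (whose size is controlled by the short-interval prime estimates already underlying Shiu's Maier matrix) together with a collection of ``minor-arc'' exponential sums $\sum_{x < p \le x+y,\,p\equiv a\,(q)} e(-kp/\alpha)$ weighted by $c_k$.

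Next, I would bound these exponential sums. Separating the progression condition by Dirichlet characters and then applying Vaughan's identity reduces each inner sum to Type~I and Type~II bilinear sums in $e(-kn/\alpha)$, whose size is controlled by Weyl-style estimates depending on the Diophantine approximability of $k/\alpha$. The finite-type hypothesis on $\alpha$ is used precisely here: it provides power-saving lower bounds on $\|nk/\alpha\|$ in the dyadic ranges that arise, yielding an exponential-sum bound of the form $y\cdot(\log x)^{-C}$ for every fixed $C$, uniformly in $|k|\le K$ and in the moduli $q$ that appear in the Maier construction. Taking $K$ to be a small power of $\log x$ balances the Erd\H{o}s--Tur\'an tail against this savings and yields the asymptotic
\[
\#\bigl(\A \cap [x,x+y] \cap \{a+qn\}\bigr) \;=\; \frac{1}{\alpha}\cdot\frac{1}{\varphi(q)}\cdot\frac{y}{\log x}\cdot\bigl(1+o(1)\bigr),
\]
which I expect is exactly the well-distribution criterion required by Section~\ref{prelims}.

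The main obstacle will be securing this asymptotic uniformly on the very short-interval and moderately large-moduli scales demanded by the Maier matrix: since $y$ is much smaller than $x$, the smoothing error, the prime-count error, and the exponential-sum savings must all simultaneously be subordinate to the main term. This is exactly where the finite-type hypothesis, rather than mere irrationality, becomes essential, since Liouville-like $\alpha$ admit rational approximations to $k/\alpha$ close enough to destroy the bilinear-form cancellation at these scales. Once the asymptotic is in hand, Theorem~\ref{Main} applies directly to yield Corollary~\ref{beatty}.
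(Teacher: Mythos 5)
The paper's own proof of Corollary~\ref{beatty} is much shorter than what you propose: it simply invokes Theorem~5.4 of Banks--Shparlinski \cite{Banks}, which already gives
\[
\sum_{n\leq X,\ \lfloor \alpha n\rfloor\equiv a\ (q)}\Lambda(\lfloor \alpha n\rfloor)
=\frac{1}{\alpha}\sum_{n\leq\lfloor\alpha X\rfloor,\ n\equiv a\ (q)}\Lambda(n)+O(X^{1-\kappa})
\]
uniformly for non-exceptional $q\le X^\kappa$, and then reads off that $E(X)=X/\log X$ and $D(X)=1/\kappa$ satisfy Definition~2.2. What you have sketched is essentially a rederivation of that Banks--Shparlinski result from scratch (Beatty characterization, Erd\H os--Tur\'an smoothing, Vaughan's identity and Type~I/II sums, finite type entering in the Weyl bounds) --- the same underlying toolkit, just inlined rather than cited. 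That is a legitimate route, but it buys nothing the citation does not.

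Two substantive corrections. First, you have the quantity to be estimated wrong: Definition~2.2 asks for $\pi_\A(X;q,a)\asymp \frac{1}{\phi(q)}E(X)$ for moduli $q$ up to $X^{1/D(X)}$, i.e.\ a \emph{long-range} progression count with \emph{large modulus}, not a short-interval count $\#(\A\cap[x,x+y]\cap\{a+qn\})$. Short-interval information is never an input to the Maier matrix method --- it is an output, obtained combinatorially from the column (progression) counts. Framing the needed estimate as a short-interval one would make the job look harder than it is. Second, and more seriously, your claimed exponential-sum savings of $y(\log x)^{-C}$ is too weak to close the argument. Here $q$ can be as large as $X^{\kappa}$ for a fixed $\kappa>0$, so the main term $\frac{X}{\phi(q)\log X}$ is only of size $\approx X^{1-\kappa}/\log X$; a savings of $X(\log X)^{-C}$ is then \emph{larger} than the main term, not smaller. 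The finite-type hypothesis in fact yields a genuine power saving $X^{1-\delta}$ in these Type~I/II sums (this is precisely what underlies Banks--Shparlinski's $O(X^{1-\kappa})$), and that is what you would need to state and prove; $(\log x)^{-C}$ is the wrong target. Once the target is corrected to a power saving for $\pi_\A(X;q,a)$ with $q\le X^\kappa$, the plan is sound and reduces to the cited literature.
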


It does not come as a surprise that the set $\B_{\alpha}$ satisfies the conditions of being well-distributed, since these have positive density in the natural numbers and are very uniform. However, we can improve upon this and use the work of Leitmann~\cite{Leit} to obtain sets of primes with arithmetic density zero in the primes that satisfy our distribution conditions. Leitmann considers general sets of the form $\Pf=\{\floor{f(n)}\}_{n\in \N}$ for nice functions $f$ (the set of such functions is denoted $\F$), and proves the following analogue of the Prime Number Theorem for arithmetic progressions: 
\[
\frac{\vp(X)}{\phi(q)\log X}\ll\pi_{\Pf}(X;q,a)\ll \frac{\vp(X)}{\phi(q)\log X},
\]
where $\vp(x)=f^{-1}(x)$. We define $\G$ as a certain nice subset of functions $g(x)$ such that $xg(x)\in\F$. Using this, we can prove that these sets also contain strings of primes in arithmetic progressions. This is summarized in the following theorem.
\begin{theorem}
\label{leitsimple}
Let $f(x)=xg(x)$ where $g\in\G$. Then there exist arbitrarily long strings of consecutive primes in $\Pf$ all of which lie in any given progression $\{a+qn\}$ with $(q,a)=1$.
\end{theorem}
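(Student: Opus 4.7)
The plan is to deduce Theorem~\ref{leitsimple} from Theorem~\ref{Main} by checking that whenever $f(x) = xg(x)$ with $g \in \G$, the set $\Pf$ of primes of the form $\floor{f(n)}$ satisfies the definition of being well-distributed. That definition (given in Section~\ref{prelims}) will presumably encode: (i) a lower bound of the shape $\pi_{\A}(X;q,a) \gg \psi(X)/\phi(q)$ for some natural density factor $\psi$ attached to $\A$, and (ii) enough uniformity in $q$, and in shifting $[1,X]$ to a short interval, to make the Maier-matrix averaging argument underlying Theorem~\ref{Main} go through for $\A=\Pf$.

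First I would invoke Leitmann's theorem directly, since it supplies precisely the density estimate
\[
\pi_{\Pf}(X;q,a) \asymp \frac{\vp(X)}{\phi(q)\log X},
\]
where $\vp = f\inv$, with the density factor $\vp(X)/X \approx 1/g(X)$ capturing the sparseness of $\Pf$ among the integers. I would then promote this to a count on short intervals $[X, X+Y]$ by partial summation, using the fact that functions in $\G$ (and hence their inverses) vary slowly. This should yield
\[
\pi_{\Pf}(X+Y;q,a) - \pi_{\Pf}(X;q,a) \asymp \frac{\vp(X+Y)-\vp(X)}{\phi(q)\log X}
\]
uniformly in $q$ in an appropriate range. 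With these estimates in hand, the well-distributedness hypothesis should follow essentially formally, after which Theorem~\ref{Main} directly produces the desired strings of consecutive $\Pf$-primes in the residue class $a\pmod q$.

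The main obstacle is arranging the balance of parameters in the resulting Maier-matrix step. Because $\Pf$ has density tending to zero in the primes, the rows of the matrix must be chosen long enough to contain many $\Pf$-primes (so the averaging argument sees a nontrivial count) yet short enough for Maier's fluctuation trick to isolate a row where \emph{every} prime lies in the residue class $a\pmod q$. The class $\G$ is defined precisely so that $g$ grows slowly enough, and $\vp$ consequently does not decay too fast, for this window of admissible parameters to be nonempty; verifying that this compatibility holds on the nose is the technical heart of the deduction. Once it does, Shiu's argument applied to $\Pf$ in place of the full set of primes produces the desired strings of $k$ consecutive primes in $\Pf$ in the prescribed arithmetic progression.
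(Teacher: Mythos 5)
Your overall strategy — show $\Pf$ is well-distributed and then invoke Theorem~\ref{Main} — matches the paper's. But your account of what ``well-distributed'' demands, and hence what actually has to be verified, is off in a way that hides the genuine content of the deduction.

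First, the definition of well-distributed in Section~\ref{prelims} involves no short-interval estimate at all. It asks for a function $D(X)$ with $\log D(X)=o\!\left(\frac{\log\log X\log\log\log\log X}{\log\log\log X}\right)$ such that $\pi_\A(X;q,a)\asymp E(X)/\phi(q)$ uniformly for non-exceptional $q$ in the wide range $q\le X^{1/D(X)}$, together with a slow-growth condition on $\log\!\left(\frac{X}{\log X\,E(X)}\right)$. The Maier matrix mechanism in Theorem~\ref{main} already converts this \emph{long-range} estimate, applied to the columns of the matrix (arithmetic progressions modulo $Q$), into information about the short rows; no partial-summation-to-short-intervals step is needed, nor is it what the paper does. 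Your proposed ``promotion to $[X,X+Y]$'' is solving a problem the definition doesn't pose.

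Second, the part you describe as ``essentially formal'' is precisely where the work is. Quoting Leitmann's asymptotic $\pi_{\Pf}(X;q,a)\asymp\frac{\vp(X)}{\phi(q)\log X}$ is not enough: one needs it uniformly for $q$ as large as $X^{1/D(X)}$ with $D(X)$ in the prescribed slow-growth class, and this requires (a) checking $f=xg\in\F$ so Leitmann's Theorem 1.2 applies, (b) the lower bound $\vp(x)\gg x/g(x)$ so that $E(X)=\vp(X)/\log X$ satisfies $\log\!\left(\frac{X}{\log X\,E(X)}\right)\ll\log g(X)$, (c) unpacking Leitmann's explicit remainder $r_f(X;q,a)$, choosing $z=\sqrt X$, $\delta^{-1}=M=\exp(d\sqrt{\log X})$, and bounding $|r(y;q,a)|\ll\frac{y}{\phi(q)}\exp(-c_2D)$ via Maier's zero-free-region lemma for good moduli $q\le y^{1/D}$, which forces the specific choice $D(X)=\frac{2+\eps}{c_2}\log\log X$, and (d) verifying that this $D(X)$ and $E(X)$ meet the two $o(\cdot)$ conditions (the latter using the hypothesis $\log g(x)=o(\ldots)$ built into $\G$). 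Finally, the ``main obstacle'' you flag — balancing the row length of the Maier matrix against sparseness of $\Pf$ — is handled once and for all inside the proof of Theorem~\ref{main} via the choice of $z$ there; it is not part of the deduction of Theorem~\ref{leitsimple}, whose sole job is to certify well-distributedness.
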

\begin{example}
The function $f(x)=x\log\log x$ works since $\log\log x\in \G$. That is, suppose we order the set $\{p\text{ prime}: p=\floor{n\log\log n}, n=1,2,\dotsc\}$ in the form $p_f(1)<p_f(2)<\dotsb$. Then for any length $k$, modulus $q$ and remainder $a$ where $(q,a)=1$, we can find some $n$ for which
\[
p_f(n+1)\equiv p_f(n+2)\equiv\cdots\equiv p_f(n+k)\equiv a \spmod q.
\]
\end{example}

The proofs of these results, and in particular Theorem \ref{Main}, generalize Shiu's work in \cite{Shiu}. Moreover, we, like Shiu, produce in Theorem \ref{main} a rigorous upper bound for when a string of length $k$ must appear. Theorem \ref{Main} is simply a consequence of the more precise Theorem \ref{main}. 

To find a long string of primes in one of our well-distributed sets, we study the object known as the Maier matrix. In general, the matrix is of the form
\[
M=\left(\begin{array}{cccc}
Q+1& Q+2 & \cdots & Q+{yz}\\
2Q+1 &2Q+2&\cdots& 2Q+{yz}\\
\vdots&\vdots&\ddots&\vdots\\
Q^D+1&Q^D+2&\cdots&Q^D+{yz}
\end{array}\right),
\]
where $Q$ is a carefully chosen product of primes and $yz$ is another nicely chosen quantity much less than $Q.$ The rows of $M$ are intervals of integers, and it is in one of these intervals that we seek to find our string of $k$ consecutive primes congruent to $a\pmod{q}.$  Similarly, the columns of $M$ are arithmetic progressions with common difference $Q.$ Here the length of the rows, $yz,$ is extremely small with respect to $Q$, so we are unable to use standard distribution results  like the Prime Number Theorem to get a long enough string.  Our method of proof is to show that there are a large number of primes equivalent to $a\pmod{q}$ versus those that are not equivalent to $a\pmod{q}$ in $M.$ Then, using a combinatorial argument, we use this to derive a lower bound on the length of a string of consecutive primes equivalent to $a\pmod{q}$ in the rows of $M$ and show that this bound goes to infinity.

To count the number of primes congruent to $a\pmod q$ versus those that are not congruent to $a\pmod q$ in $M,$ we utilize the fact that our set of primes is well-distributed. This gives us the existence of a Prime Number Theorem for arithmetic progressions, which lets us count primes via the columns. The only columns which can possibly contain primes are those with first term $Q+i$ where $(Q,i)=1,$ so these will be the only columns that concern us. We pick $Q$ such that $q|Q,$ which means that we can tell whether a number is equivalent to $a\pmod{q}$ based on which column it lies in. To be precise, $n\equiv a\pmod{q}$ and $n\in M$ if and only if $n$ lies in a column with first term $Q+i$ where $i\equiv a\pmod{q}.$ Define ``good columns'' to be the columns of $M$ containing primes whose elements are equivalent to $a\pmod{q},$ and similarly define ``bad columns'' to be the columns containing primes whose elements are not equivalent to $a\pmod{q}.$ We have for our set $\A,$
\[
\frac{1}{\phi(Q)}E(X)\ll\pi_\A(X;Q,l)\ll \frac{1}{\phi(Q)}{E(X)}
\]
for some function $E(X)$ whenever $(q,l)=1.$ What is important here is that we can use the above asymptotic on the columns of $M$ to reduce the problem of estimating the number of good primes versus bad in $M$ to estimating the number of good columns versus bad. We can rig our choice of $Q$ such that $\#S$ is much larger than $\#T,$ so the result follows.

In Section 2, we will provide the definitions and technical lemmas necessary to proceed with the proofs of the main theorems. In Section 3, we prove the main result (Theorem~\ref{Main}) and Corollary~\ref{beatty}. Finally, we adapt the work of Leitmann in Section 4 to show that our main result applies to the class of sequences discussed in Theorem~\ref{leitsimple}.

\section*{Acknowledgements}

We would like to thank Robert Lemke Oliver for his guidance throughout this project, and Ken Ono for his advice and encouragement.

\section{Preliminaries}
\label{prelims}
Throughout this paper, $\phi$ will be Euler's totient function and $f(X)\ll g(X)$ will mean $f(X)=O(g(X)).$ Given $\A$ a subset of the primes and $(q,a)=1,$ write
\[
\pi_\A(X;q,a)=\#\{p\in\A:p\leq X, p\equiv a \spmod{q}\}
\]
and $p_\A(n)$ for the $n^{th}$ smallest prime in $\A,$ so that
\begin{align}
\A=\{p_\A(1)<p_\A(2)<\dots\}.
\end{align}
A string of $k$ consecutive primes in $\A$ refers to a set of the form $\{p_\A(n),p_\A(n+1),\dots,p_\A(n+k-1)\}$ where $n\in\mathbb{N}.$

Given that the theory of Dirichlet $L$-functions is integral to the study of primes in arithmetic progressions, it follows that they should be involved somehow in the proof of Theorem \ref{main} and its corollaries. However, we will not work with $L$-functions directly in this paper, though we use several results whose proofs are dependent on their properties. For the remainder of this paper, $C$ will refer to some fixed constant whose existence comes from Lemma 1 of Shiu \cite{Shiu}. $C$ is such that, for all $q\in\N$ and large enough $X,$ there exists a $y$ and prime $p_0\gg\log y$ such that none of the $L$-functions modulo $q\prod_{p\leq y, p\neq p_0}p$ have a zero in the region defined by Equation~\ref{region} below.

\begin{definition}
We say that a modulus $q$ is exceptional if there is an $L$-function induced by a character modulo $q$ with a zero in the region
\begin{align}
\label{region}
1\ge\mathscr{R}(s)>1-\frac{C}{\log(q(|\mathscr{I}(s)|+1))}.
\end{align}
\end{definition}
\begin{definition}
We say that a subset of the primes $\A$ is well-distributed if there exists a function $D(X)$ satisfying
\begin{align}
\log D(X)=o\left(\frac{\log\log X\log\log\log\log X}{\log\log\log X}\right)
\end{align}
such that for all $q$ not exceptional and $X\geq q^{D(X)},$ the estimate
\begin{align}
\pi_\A(X;q,a)\asymp \frac{1}{\phi(q)}E(X)
\end{align}
holds uniformly for $(a,q)=1$ for some function $E(X)$ with
\begin{align}
\log\left(\frac{X}{\log X E(X)}\right)=o\left(\frac{\log\log X\log\log\log\log X}{\log\log\log X}\right).
\end{align}
\end{definition}
Note that if $\A$ is taken to be the set of all primes as in Shiu's theorem, we  have that $E(X)=\frac{X}{\log X}$, and in general we want $\A$ to be large enough that $\frac{X}{\log X E(X)}$ grows slowly. The upper bound on $D(X)$ is to ensure that we are able to choose $q$ not too much smaller than $X$, as we will need to do. Shiu's theorem, for example, takes $D(X)$ to be constant.

Adopting the notation of Shiu \cite{Shiu}, we make the following definition.
\begin{definition}
Given $q\in\mathbb{N},$ $q\geq 2,$ define
\[
A_+=\{a\in\mathbb{N}:p|q\Rightarrow a\equiv 1\spmod{p}\},
\]
\[
A_-=\{a\in\mathbb{N}:p|q\Rightarrow a\equiv -1\spmod{p}\},
\]
and $A_\pm=A_+\cup A_-.$
\end{definition}
Though the existence of arbitrarily long strings of primes congruent to $a\pmod{q}$ in well-distributed subsets of the primes does not depend on the choice of $a$ as long as $(q,a)=1,$ we are able to get better bounds on when these strings must appear in the case that $a\in A_\pm.$
For $a\in \mathbb{N}$, we write
\begin{align}
\log(y,t)=
\begin{cases}
\log y & \text{if }a\in A_\pm\\
\log t & \text{if }a\notin A_\pm
\end{cases}.
\end{align}
Finally, we record two lemmas that will be useful later.
\begin{definition}
For any $q$, let $\mathscr{S}_q(z)$ denote the set of positive integers $n\le z$ with $n$ a product of primes congruent to $1\pmod{q}$. Where this is not ambiguous, we will simply write $\mathscr{S}(z)$.
\end{definition}

\begin{lemma}[Shiu~\cite{Shiu}]
\label{scriptS}
We have
\begin{align}
\# \mathscr{S}_q(z)\asymp\frac{z(\log z)^{1/\phi(q)}}{\log z}
\end{align}
where the implied constant may depend on $q$.
\end{lemma}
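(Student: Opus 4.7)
The plan is to establish the stated asymptotic by analyzing the Dirichlet series
\[
F(s)=\sum_{n\in\mathscr{S}_q}\frac{1}{n^s}=\prod_{p\equiv 1\pmod q}\left(1-\frac{1}{p^s}\right)^{-1},
\]
which converges absolutely for $\mathscr{R}(s)>1$. Taking the logarithm, expanding, and using the orthogonality relation $[p\equiv 1\pmod q]=\frac{1}{\phi(q)}\sum_{\chi\bmod q}\chi(p)$ for $(p,q)=1$, I would write
\[
\log F(s)=\frac{1}{\phi(q)}\sum_{\chi\bmod q}\log L(s,\chi)+h(s),
\]
where $h(s)$ collects the contribution of prime powers with $k\ge 2$ together with the finitely many primes dividing $q$, and is holomorphic and bounded in a neighborhood of $s=1$.

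Next I would isolate the singularity of $F$ at $s=1$. The principal character contributes $L(s,\chi_0)=\zeta(s)\prod_{p\mid q}(1-p^{-s})$, which has a simple pole at $s=1$, while the classical nonvanishing $L(1,\chi)\neq 0$ for nonprincipal $\chi$ shows that the remaining factors are holomorphic and nonzero there. It follows that
\[
F(s)=\frac{G(s)}{(s-1)^{1/\phi(q)}},
\]
where $G(s)$ is holomorphic and nonzero at $s=1$, and has an analytic continuation to a standard zero-free region of the $L(s,\chi)$.

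From this singularity structure the asymptotic
\[
\#\mathscr{S}_q(z)=\sum_{n\le z,\,n\in\mathscr{S}_q}1\;\sim\;\frac{G(1)}{\Gamma(1/\phi(q))}\cdot\frac{z(\log z)^{1/\phi(q)}}{\log z}
\]
would follow by invoking the Selberg--Delange method (or, equivalently, a Tauberian theorem of Wirsing type for multiplicative functions supported on a set of primes of positive Dirichlet density $1/\phi(q)$). Since this gives matching upper and lower bounds of the claimed order, we obtain $\asymp$ as required.

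The main obstacle is the Tauberian step: Selberg--Delange requires continuation of $F$ into a sufficiently wide zero-free region and a control on its growth along vertical lines. This could alternatively be avoided by a more elementary approach: combine the prime number theorem for arithmetic progressions, which gives $\sum_{p\le z,\,p\equiv 1\pmod q}\log p\sim z/\phi(q)$, with the identity $\log n\cdot\mathbf{1}_{\mathscr{S}_q}(n)=\sum_{d\mid n}\Lambda(d)\mathbf{1}_{\mathscr{S}_q}(d)\mathbf{1}_{\mathscr{S}_q}(n/d)$ and a hyperbola-method / partial-summation induction on the logarithmic sum $\sum_{n\le z,\,n\in\mathscr{S}_q}\frac{1}{n}$, to recover $\#\mathscr{S}_q(z)\asymp z(\log z)^{1/\phi(q)-1}$ with implied constants depending on $q$. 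Either route yields the lemma; since the result is used only as a qualitative input in what follows, citing Shiu \cite{Shiu} together with this sketch is sufficient.
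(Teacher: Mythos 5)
The paper does not prove this lemma at all---it is quoted directly from Shiu~\cite{Shiu}, whose own Lemma~2 cites it as a classical result in the tradition of Landau's asymptotic for integers represented as sums of two squares. So there is no in-paper argument for you to match, and your job is only to supply a plausible proof.

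Your Selberg--Delange route is the standard one and is correct. The Euler product $F(s)=\prod_{p\equiv1\pmod q}(1-p^{-s})^{-1}$, the orthogonality decomposition giving $\log F(s)=\frac{1}{\phi(q)}\sum_{\chi}\log L(s,\chi)+h(s)$, the identification of the singularity from the simple pole of $L(s,\chi_0)$ together with $L(1,\chi)\neq0$ for $\chi\neq\chi_0$, and the Tauberian step all go through; in fact they give the stronger asymptotic $\#\mathscr{S}_q(z)\sim C_q\, z(\log z)^{1/\phi(q)-1}$ rather than merely $\asymp$. One caveat worth flagging: the ``alternative elementary'' route you sketch is trickier than it looks. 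A naive Rankin upper bound loses a full factor of $\log z$ (one gets $\ll z(\log z)^{1/\phi(q)}$ rather than $\ll z(\log z)^{1/\phi(q)-1}$), so to get the correct order by elementary means you would need the full hyperbola/Wirsing machinery you allude to, carried out carefully, or an appeal to a mean-value theorem for nonnegative multiplicative functions (Halberstam--Richert or P.~Shiu's Brun--Titchmarsh analogue). Since the lemma is only used qualitatively, citing Shiu together with the Selberg--Delange sketch is entirely adequate, and your first route is the one to keep.
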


The following classical theorem estimates the distribution of $t$-smooth numbers, or integers whose prime divisors are all less than $t$. 

\begin{lemma}[De Bruijn~\cite{DeBr}]
\label{smooth}
Let $\Psi(x,t)$ denote the number of positive integers $n\le x$ which are products of primes less than $t$, where $t\le x$ and $t\rightarrow\infty$ as $x\rightarrow\infty$. Then if $u=\log x/\log t$, we have
\begin{align}
\Psi(x,t)\le x(\log t)^2\exp(-u\log u-u\log\log u+O(u)).
\end{align}
\end{lemma}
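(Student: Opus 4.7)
The plan is to apply Rankin's upper-bound trick and then optimize in a single parameter $\sigma \in (0,1)$. The starting observation is that if $n \le x$ is $t$-smooth then $(x/n)^{\sigma} \ge 1$, so
\[
\Psi(x,t) \;\le\; \sum_{\substack{n\ge 1 \\ P^+(n) < t}} \left(\frac{x}{n}\right)^{\sigma} \;=\; x^{\sigma}\prod_{p<t}\bigl(1-p^{-\sigma}\bigr)^{-1},
\]
where $P^+(n)$ denotes the largest prime factor of $n$. This reduces everything to controlling the Euler product on the right and choosing $\sigma$ well.

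The second step is to estimate that Euler product. Writing $\sigma = 1 - \alpha$ with $\alpha > 0$ small and taking logarithms, the higher prime-power contributions in $\log(1-p^{-\sigma})^{-1}$ are bounded by $\sum_{p} p^{-2\sigma} = O(1)$, so the dominant term is
\[
\sum_{p<t} p^{-\sigma} \;=\; \sum_{p<t}\frac{p^{\alpha}}{p}.
\]
Using the prime number theorem and partial summation, this sum is asymptotic to $\int_2^t u^{\alpha-1}/\log u\,du$; after the substitution $w = \log u$ and an integration by parts, one sees that in the regime $\alpha\log t\to\infty$ this integral behaves like $t^{\alpha}/(\alpha\log t)$. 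Consequently
\[
\prod_{p<t}\bigl(1-p^{-\sigma}\bigr)^{-1} \;\le\; \exp\!\left(\frac{t^{\alpha}}{\alpha\log t}\bigl(1+o(1)\bigr) + O(\log\log t)\right).
\]

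The third step is to balance $x^{-\alpha} = e^{-\alpha u\log t}$ against the Euler product. Setting $t^{\alpha}/\alpha \approx \log x$ forces $e^{\alpha\log t} \approx \alpha u\log t$, so to leading order $\alpha\log t \approx \log u + \log\log u$, and the natural choice is
\[
\alpha \;=\; \frac{\log u + \log\log u}{\log t}.
\]
With this choice $x^{1-\alpha} = x\exp(-u\log u - u\log\log u)$, while $t^{\alpha}/(\alpha\log t) = u\log u/(\log u + \log\log u) = O(u)$. Assembling the two factors and absorbing the Mertens-type error and the $o(1)$ slack into $\exp(O(u))$ and the prefactor $(\log t)^2$ gives the claimed bound. (The interesting regime is $u\to\infty$; when $u$ is bounded, the inequality is immediate from the trivial bound $\Psi(x,t)\le x$ and the prefactor $(\log t)^2$.)

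The main obstacle is carrying out the estimate for $\sum_{p<t}p^{-\sigma}$ uniformly in the range of $\sigma$ we need, and showing that the error terms from the prime number theorem and from the asymptotic evaluation of the exponential-integral type expression really do fit inside the factor $(\log t)^2\exp(O(u))$. Once that is in place, the optimization is forced, and the rest is routine substitution.
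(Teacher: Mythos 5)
The paper does not actually prove Lemma~\ref{smooth}; it simply cites De~Bruijn's 1951 result, so there is no ``paper's own proof'' to compare against. Your Rankin's-trick derivation is a genuinely different and more elementary route than De~Bruijn's original argument (which develops the full Dickman-$\rho$ asymptotics via a functional equation and careful analysis, not just an upper bound), and your approach is the one most modern expositions use precisely because it is short and delivers exactly an upper bound of the required shape. The key steps are sound: the Rankin bound $\Psi(x,t)\le x^{\sigma}\prod_{p<t}(1-p^{-\sigma})^{-1}$, the evaluation $\sum_{p<t}p^{-\sigma}\sim t^{\alpha}/(\alpha\log t)$ when $\alpha\log t\to\infty$ via partial summation and the exponential-integral asymptotic, the choice $\alpha=(\log u+\log\log u)/\log t$ making $x^{1-\alpha}=x\exp(-u\log u-u\log\log u)$, and the observation that $t^{\alpha}/(\alpha\log t)=u\log u/(\log u+\log\log u)=O(u)$. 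The leftover pieces --- the small-$w$ portion of the integral $\int_{\log 2}^{\log t}e^{\alpha w}/w\,dw$, which contributes $\le\log\log t+O(1)$, and the $O(1)$ from higher prime powers --- exponentiate to $O(\log t)$, comfortably inside the stated $(\log t)^2$ prefactor.

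Two places need to be tightened before this is airtight. First, the higher-prime-power bound $\sum_p p^{-2\sigma}=O(1)$ requires $\sigma>1/2$, i.e.\ $\alpha<1/2$, which translates into the range restriction $u\log u<\sqrt{t}$; you should state this restriction (it holds with enormous room to spare for the application in the paper, where $u\asymp\log\log y/\log\log\log y$ and $\log t\gg\log y\log\log\log y/\log\log y$, but the lemma as quoted does not announce any range, so the reader needs to be told where the argument lives). Second, the prime number theorem error in passing from $\sum_{p<t}p^{\alpha-1}$ to $\int_2^t u^{\alpha-1}/\log u\,du$ contributes terms of order $t^{\alpha}e^{-c\sqrt{\log t}}/\alpha$, and one should verify this is $o(t^{\alpha}/(\alpha\log t))$; it is, since $e^{-c\sqrt{\log t}}\log t\to 0$, but you gloss over this. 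Neither gap is structural --- both are exactly the ``routine substitution'' you allude to at the end --- but in a complete write-up they must appear, since they are what pins down the uniformity that makes $(\log t)^2\exp(O(u))$ a legitimate catch-all.
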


\section{Main Result}
We now prove our main result, Theorem \ref{Main}, restated below as Theorem \ref{main}. Here we give the precise bounds on the length of a string of consecutive primes in the set $\A$ which are in an arithmetic progression, and it is clear that these go to infinity.
\begin{theorem}
\label{main}
Let $\A$ be a well-distributed subset of the primes, $(q,a)=1,$ and set $F(X)=\frac{X}{E(X)\log X}.$ Then there exists a string of length $k$ of consecutive primes in $\A$ all congruent to $a\bmod{q}$, all of which are less than $X,$ and where, if $a\in A_\pm,$
\[
k\gg\left(\frac{\log\log X}{\log(\max\{D(X),F(X)\})}\right)^{1/\phi(Q)}
\]
and otherwise,
\[
k\gg \left( \frac{\log\log X\log\log\log\log X}{\log\log\log X\log( \max\{D(X),F(X)\})} \right)^{1/\phi(Q)}.
\]
\end{theorem}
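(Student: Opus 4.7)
The plan is to generalize Shiu's Maier-matrix argument to the well-distributed set $\A$. Following the sketch in the introduction, I would first set up the Maier matrix $M$ whose $j$-th row is the interval $[jQ+1,\, jQ+yz]$ for $j=1,\ldots,Q^{D-1}$, where $Q$ is a product of small primes chosen with $q\mid Q$ and $Q$ non-exceptional (the constant $C$ from Section~\ref{prelims} guarantees such a $Q$ exists, excluding one prime $p_0$ if necessary). Since $q\mid Q$, whether an entry $jQ+i$ is congruent to $a\pmod{q}$ depends only on $i$, so the columns indexed by $i$ with $(i,Q)=1$ split into ``good'' columns $S=\{i\le yz:(i,Q)=1,\;i\equiv a\pmod{q}\}$ and ``bad'' columns $T=\{i\le yz:(i,Q)=1,\;i\not\equiv a\pmod{q}\}$, and all $\A$-primes in $M$ lie in $S\cup T$.

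Next, I would apply the well-distributed hypothesis: $\pi_\A(Q^D;\,Q,\,i)\asymp E(Q^D)/\phi(Q)$ for every residue $i$ coprime to $Q$, provided $D\ge D(Q^D)$. Summing over columns, the total number of $\A$-primes in $M$ congruent to $a\pmod{q}$ is $G\asymp \#S\cdot E(Q^D)/\phi(Q)$ and the total not congruent is $B\asymp \#T\cdot E(Q^D)/\phi(Q)$. A pigeonhole argument then produces, in some row $j$, a run of consecutive $\A$-primes all $\equiv a\pmod{q}$ of length at least
\[
k \;\gg\; \frac{G}{B+Q^{D-1}} \;\asymp\; \min\!\left(\frac{\#S}{\#T},\; \frac{\#S\cdot E(Q^D)}{\phi(Q)\,Q^{D-1}}\right),
\]
because deleting the $b_j$ bad $\A$-primes in row $j$ splits the $g_j$ good ones into at most $b_j+1$ runs, and by averaging some row must satisfy $g_j/(b_j+1)\ge G/(B+Q^{D-1})$.

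The technical heart of the proof is then the counting of $\#S$ and $\#T$. Any $i\in T$ has no prime factors $\le y$ other than possibly $p_0$, so $\#T$ is bounded by a rough-number count controlled via De~Bruijn's Lemma~\ref{smooth}. For $\#S$, when $a\in A_\pm$ one can exhibit many elements of $S$ by combining a fixed small residue representative of $a$ with products of primes $\equiv 1\pmod{q}$, invoking Lemma~\ref{scriptS} to obtain $\#S\gg yz(\log yz)^{1/\phi(q)-1}$. When $a\notin A_\pm$ this direct construction is unavailable and one must settle for a weaker smooth-number count, which ultimately introduces the additional factor $\log\log\log\log X/\log\log\log X$ seen in the second case of the theorem.

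The main obstacle is the delicate optimization of the parameters $y$, $z$, and $D$. Several constraints must be simultaneously satisfied: $yz\ll Q$ so that the matrix makes sense, $Q$ non-exceptional so the well-distributed asymptotic applies, and $D\ge D(Q^D)$ so that the hypothesis covers the full height of $M$. Balancing these against the smooth-number and prime-counting bounds above (essentially $y\asymp\log X$, $z$ a slowly growing function determined by $D(X)$ and $F(X)$, and $D$ logarithmic in $X$ so that $X\asymp Q^D$) and then substituting into the displayed lower bound for $k$ produces the two stated bounds, with the $A_\pm$ case coming out cleaner because $\#S$ then nearly saturates what Lemma~\ref{scriptS} allows.
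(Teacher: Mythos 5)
Your high-level architecture matches the paper: a Maier matrix of height $Q^{D-1}$ and width $yz$ with $q\mid Q$, a ``good/bad column'' dichotomy $S/T$, an application of the well-distributed hypothesis to each column, and a pigeonhole argument on rows. Your single mediant inequality $k\gg G/(B+Q^{D-1})\asymp\min(\#S/\#T,\ \#S\,E(Q^D)/(\phi(Q)Q^{D-1}))$ is a clean and correct reformulation of what the paper handles as two separate cases (``some row of $M'$ has a favorable ratio'' versus ``at least half of $P_1$ lies outside $M'$''), and it yields the same two terms. Up to this point the two arguments are essentially identical.

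The gap is in the part you describe as ``the technical heart,'' and it is not filled by the ideas you offer. You treat $Q$ as simply ``a product of small primes'' and do not specify which primes are included, but the entire estimation of $\#S$ and $\#T$ hinges on this choice: the paper sets $Q=q\prod_{p\in\Pa}p$ where, in the $a\notin A_\pm$ case, $\Pa$ deliberately includes all primes $\equiv a\pmod q$ up to $yz/t$ but only the primes $\equiv 1\pmod q$ lying in $[t,y]$, with $t=\exp\bigl(\tfrac14\log y\log\log\log y/\log\log y\bigr)$. That design is what forces each $i\in S$ to factor as $pn$ with $p>yz/t$, $p\equiv a\pmod q$, and $n<t$ a product of $1\pmod q$-primes, which is how the lower bound $\#S\gg yz(\log t)^{1/\phi(q)}/\log y$ is actually obtained — a dyadic prime count combined with Lemma~\ref{scriptS}, not a ``weaker smooth-number count.'' De Bruijn's Lemma~\ref{smooth} enters only in the \emph{upper} bound on $\#T$, to control the $t$-smooth contribution; the extra factor $\log\log\log\log X/\log\log\log X$ in the second displayed bound comes from the ratio $\log t/\log y$, i.e.\ from the construction of $\Pa$, not from a lossy $\#S$ estimate. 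Finally, in the $a\in A_\pm$ case you would also need the translation $I\mapsto\{m+1,\dots,m+yz\}$ (resp.\ $\{n-yz+1,\dots,n\}$) with $m\equiv a-1$, $n\equiv a+1\pmod q$ and $m,n\equiv 0$ modulo each $p\mid Q/q$, which identifies $S$ with products of $1\pmod q$-primes in $[1,yz]$; ``combining a fixed small residue representative of $a$ with products of primes $\equiv 1\pmod q$'' multiplicatively would shrink the allowable range for the cofactor and needs to be worked out, whereas the additive shift is clean. As it stands, the proposal sketches the scaffolding correctly but leaves unproved precisely the estimates on $\#S$ and $\#T$ on which the stated bounds for $k$ depend.
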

\subsection{Proof of Theorem \ref{main}}
The proof of this theorem has the same general structure of Shiu's beautiful proof in \cite{Shiu} of the existence of arbitrarily long strings of consecutive primes lying in any arithmetic progression that can contain primes. 

First we construct a number $Q$ which gives the Maier matrix the desired distribution properties. Given $y\in\mathbb{N}$ and $p_0$ some prime, we will write $P_q(y,p_0)=q\prod_{p_0\neq p\leq y}p.$ By Lemma 1 in \cite{Shiu}, we can choose $y\gg \frac{\log X}{D(X)}$ and a prime $p_0\gg\log y$ such that
\[
X^{\frac{1}{3D(X)}}\leq P_q(y,p_0)\ll X^{\frac{1}{3D(X)}}\left(\frac{\log X}{D(X)}\right)^2
\]
and $P_q(y,p_0)$ is not exceptional.
Now, define
\begin{align}
\label{Qdivs}
\Pa=
\begin{cases}
\{p\leq y :  p\not\equiv 1\spmod{q}\}\setminus\{p:p= p_0 \text{ or } p|q\} & \mbox{if } a\in A_\pm \\
\{p\leq y: p\not\equiv 1,a\spmod{q}\}\\
\cup\{t\leq p\leq y: p\equiv 1\spmod{q}\}\\
\cup\{p\leq\frac{yz}{t}: p\equiv a\spmod{q}\}\setminus\{p:p=p_0\text{ or }p|q\} & \mbox{otherwise,}
\end{cases}
\end{align}
where $z=\max\left\{\left(\frac{X}{\log X E(X)}\right)^3,D(X)^3\right\}$,  $t=\exp\left(\frac{\frac{1}{4}\log y\log\log\log y}{\log\log y}\right),$ and
\[
Q_q(y,p_0)=q\prod_{p\in\Pa}p.
\]
Note that $Q_q(y,p_0)|P_q(y,p_0),$ so $Q_q(y,p_0)$ is also not exceptional. For the remainder of this proof, we will use the shorthand $Q$ for $Q_q(y,p_0).$

Since $\frac{X D(X)}{\log X E(X)}=o(z)$ and
\[
\log z=o\left(\frac{\log\log X\log\log\log\log X}{\log\log\log X}\right)
\]
by our choice of $z,$ $\log z=o(\log(y,t)).$ In addition, $z<y,$ $t\leq\sqrt{y}\leq\sqrt{yz},$ so $t\leq\frac{yz}{t}.$ Suppose that $a\notin A_\pm.$ Then $\Pa$ contains the set
\[
\{p\leq t : p\equiv a\spmod{q}\}\cup\{t\leq p\leq y: p\equiv 1\spmod{q}\}.
\]
Hence, the Prime Number Theorem for arithmetic progressions yields
\[
\sum_{p<y, p|Q}\log p=\left(1-\frac{1}{\phi(q)}\right)\sum_{p<y}\log p + o(y).
\]
Because $\frac{1}{\phi(q)}\leq\frac{1}{2}$ whenever $q>2,$ we have $\log Q>\frac{1}{3}\log P_q(y,p_0)$. Hence, $Q$ is on the order of $X^{1/D(X)}.$ Since we assumed that $\A$ is well-distributed, this implies, for any $l,$ that
\[
\pi_\mathscr{A}(X;Q,l)\asymp \frac{1}{\phi(Q)}E(X).
\]
As was outlined in the introduction, this sort of estimate will be necessary to count the number of primes in our Maier matrix equivalent to $a\pmod{q}.$ 

Now, in order to construct our Maier matrix, we make the following definition. First, pick natural numbers $m$ and $n$ such that $m,n\equiv 0\pmod{p}$ for all $p|\frac{Q}{q}$ and $m\equiv a-1 \pmod{q}$, $n\equiv a+1 \pmod{q}.$ Then set
\begin{align}
I=
\begin{cases}
\{m+1,m+2,\dots,m+yz\} & \mbox{if }a\in A_+ \\
\{n-yz+1,n-yz+2,\dots,n\} & \mbox{if }a\in A_-\\
\{1,2,\dots,yz\} & \mbox{if } a\notin A_\pm
\end{cases}.
\end{align}
Write $I=\{i_1<i_2<\dots<i_{yz}\}$. We define the Maier matrix $M$ to be
\begin{align}
\begin{pmatrix}
Q+i_1 & Q+i_2 & \dots & Q+i_{yz} \\
2Q+i_1 & 2Q+i_2 & \dots & 2Q+i_{yz} \\
\vdots & \vdots & \ddots & \vdots \\
Q^D+i_1 & Q^D+i_2 & \dots & Q^D+i_{yz} \\
\end{pmatrix}.
\end{align}
Note that $M$ is a matrix with $Q^{D-1}$ rows and $yz$ columns, whose columns form arithmetic progressions with common difference $Q$ and whose rows are intervals of integers.  In one of these intervals, we will be guaranteed to find a string of length $k$.

 In order to show this, we will show that there are so many primes equivalent to $a\pmod{q}$ in $M$ in relation to those that are not that some row is forced to have a string of length $k$. In this vein, we make the following definitions related to $M.$ Define
\begin{align}
S= \{i\in I : (i,Q)=1, i\equiv a \spmod{q}\}
\end{align}
and
\begin{align}
T=\{i\in I : (i,Q)=1, i\not\equiv a \spmod{q}\}.
\end{align}

These sets consist of elements of the first row of $M$ with $Q$ subtracted off of each term. Hence, $S$ and $T$ taken together represent all of the first terms of arithmetic progressions in the columns of $M$ that could possibly contain primes. $S$ represents columns of elements that are equivalent to $a\pmod{q}$ which could contain primes and $T$ represents columns of elements that are not equivalent to $a\pmod{q}$ which could contain primes. 

Our goal is to count the number of elements in $S$ versus $T$ and combine this with our asymptotic expression for $\pi_\A(X;Q,l)$ on the columns of $M$ to bound the number of primes congruent to $a\pmod{q}$ in $M\cap\A.$ We define the sets
\begin{align}
P_1=\{p\in M\cap\A : p\equiv a\spmod{q}\}
\end{align}
and
\begin{align}
P_2=\{p\in M\cap\A : p\not\equiv a\spmod{q}\}.
\end{align}
For the sake of brevity, we will refer to primes in $P_1$ as \emph{good primes} and primes in $P_2$ as \emph{bad primes}. Finally, write $M'$ to mean the rows of $M$ which contain bad primes.

There are two cases for the distribution of good primes versus bad in $M.$ Either there is a large ratio of good primes to bad primes in some row in $M'$ or there is a large number of good primes in $M\setminus M'$. Precisely, either there exists an interval $R_0\in M'$ such that
\[
 \#(R_0\cap P_1)\geq\frac{\# P_1}{2\# P_2}\#(R_0\cap P_2)
\]
or
\[
\#(P_1\cap(M\setminus M'))\geq\frac{1}{2}\#P_1.
\]
To see this, suppose to the contrary that neither of the above hold. Then we would have
\begin{eqnarray*}
\#P_1&=&\#(P_1\cap M')+\#(P_1\cap(M\setminus M'))\\
&=&\sum_{R\in M'}\#(P_1\cap R)+\#(P_1\cap (M\setminus M'))\\
&<&\frac{\#P_1}{2\#(P_2)}\sum_{R\in M'}\#(P_2\cap R)+\frac{\#P_1}{2}\\
&=&\frac{\#P_1}{2\#P_2}\#P_2+\frac{\#P_1}{2}\\
&=&\#P_1,
\end{eqnarray*}
which is clearly a contradiction.

We now break down the rest of the proof into these two cases.

\textit{Case I.} Suppose that there exists a row $R_0\in M'$ containing $\frac{\#P_1}{2\#P_2}$ times as many good primes as bad. By the Pigeonhole Principle, this row must contain a string of $k$ consecutive good primes with $k\gg\frac{\#P_1}{\#P_2}.$ Furthermore, since we know
\[
\pi_\A(X;Q,l)\asymp\frac{1}{\phi(Q)}E(X),
\]
we can write
\[
\#P_1\gg\#S\frac{1}{\phi(Q)}E(X)\ \ \ \  \mbox{ and } \ \ \ \  \#P_2\ll\#T\frac{1}{\phi(Q)}E(X).
\]
This implies that $k\gg\frac{\#S}{\#T}.$

\textit{Case II.} Suppose that $\#(P_1\cap(M\setminus M'))\geq\frac{1}{2}\#P_1$. Note that the total number of rows in $M$ is $Q^{D(X)-1}=\frac{Q^{D(X)}}{Q}\ll \frac{X}{Q}.$ So, clearly the number of rows in $M\setminus M'$ is $\ll \frac{X}{Q}.$ In the worst case, the primes in $P_1$ are distributed evenly amongst intervals in $M\setminus M'.$ Hence, since there are no bad primes in $M\setminus M'$, one of the rows in $M\setminus M'$ contains a string of length $k\gg \frac{Q\#P_1}{X}$ consecutive good primes.

Recall that we have 
\[
\#P_1 \gg \#S\frac{1}{\phi(Q)}E(X),
\]
so that
\[
k\gg\#S\frac{Q}{X\phi(Q)}E(X).
\]
We can estimate $\frac{1}{\phi(Q)},$ using the multiplicativity of $\phi.$ We have
\[
\frac{Q}{\phi(Q)}=\frac{q}{\phi(q)}\prod_{p\in\Pa}\left(1-\frac{1}{p}\right)^{-1},
\]
and by a generalization of Mertens' Theorem (Theorem 429 in \cite{Hardy}), this yields
\[
\frac{Q}{\phi(Q)}\gg\frac{\log y}{(\log(y,t))^{1/\phi(q)}}
\]
and hence 
\[
k\gg\# S\frac{\log y E(X)}{(\log(y,t))^{1/\phi(q)}X}.
\]

Notice that in both Case I and Case II we have completely eliminated any mention of $P_1$ and $P_2$ from our bounds on $k$.  We will proceed by bounding $\# S$ and $\# T$, and from there we can conclude the theorem. We proceed separately for $a\in A_{\pm}$ and $a\not\in A_{\pm}$.

\emph{Case when} $a\in A_{\pm}$. Recall that $i\in S$ if and only if $(i,Q)=1$ and $i\equiv a\pmod{q}$. If $i\in S$ and $a\in A_+$, then since $m\equiv a-1\pmod{q}$, we have $i-m\equiv1\pmod{q}$; and as $m$ is divisible by every factor of $Q$ except $q$ and $q$ does not divide $i-m$, we have $(i-m,Q)=1$. Conversely, if indeed $i-m\equiv1\pmod{q}$ and $(i-m,Q)=1$, we can see by the same reasoning that $(i,Q)=1$ and $i\equiv a\pmod{q}$, so that $i\in S$. That is, setting $j=i-m$, we have
\[
\# S=\#\{j\in[1,yz]:(j,Q)=1,j\equiv1\spmod{q}\},
\]
and, similarly,
\[
\# T =\#\{j\in[1,yz]:(j,Q)=1,j\not\equiv1\spmod{q}\}.
\]
By similar reasoning but setting $j=n-i$, we have the same bijection for $\# S$ and $\# T$ in the case of $A_-$. For convenience, we will refer to these sets also by $S$ and $T$.

Notice that any element of $[1,yz]$ which is a product of primes that are $1\pmod{q}$ lies in $S$, since such an element is $1\pmod{q}$ and none of its prime factors divide $Q$. Thus, by Lemma~\ref{scriptS}, $$\# S\geq\# \mathscr{S}_q(yz)\gg\frac{yz(\log y)^{1/\phi(q)}}{\log y}.$$  

To bound $\# T $ from above, we just need to count those elements of $[1,yz]$ which are relatively prime to $Q$ and not $1\pmod{q}$. But $Q$ is divisible by every prime no greater than $y$ and not $1\pmod{q}$ except for $p_0$; therefore, if $j$ is relatively prime to $Q$, $j$ must be a product of primes that are either greater than $y$, equivalent to $1\pmod{q}$, or equal to $p_0$. Furthermore, $j$ can only be divisible by one prime larger than $y$ since $j\le yz<y^2$, and $j$ cannot consist entirely of prime factors that are $1\pmod{q}$, or it would itself be $1\pmod{q}$. 

We thus narrow $j$ down to the following two sets: numbers $pn$ where $p$ is a prime larger than $y$ and $n$ consists entirely of prime factors that are $1\pmod{q}$, and multiples of $p_0$. The second set is of size $yz/p_0=O(yz/\log y)$, since $p_0\gg\log y$. 

For the first set, we count the elements $pn$ where $p$ belongs to each of $[y+1,2y],[2y+1,4y],[4y+1,8y],\dotsc,[2^{\lceil\log z\rceil-1}y+1,2^{\lceil\log z\rceil}y]$ separately. Now if $p>2^{l-1}y$ then $n<z/2^{l-1}$ (since $pn\le yz$), so $n\in\mathscr{S}_q(z/2^{l-1})$. Using Lemma~\ref{scriptS} to estimate $\mathscr{S}_q(z/2^{l-1})$, we may compute
\begin{align*}
\#\{pn\}&\ll\sum_{l\le\log z}\ \sum_{2^{l-1}y<p\le 2^ly}\ \sum_{n\in \mathscr{S}_q(z/2^{l-1})}1\\
&\ll \sum_{l\le\log z}\ \sum_{2^{l-1}y<p\le 2^ly}\frac{(z/2^{l-1})(\log z)^{1/\phi(q)}}{\log z}\\
&\ll \sum_{l\le\log z}\frac{2^{l-1}y}{\log y}\frac{(z/2^{l-1})(\log z)^{1/\phi(q)}}{\log z}\\
&\ll \frac{yz(\log z)^{1/\phi(q)}}{\log y},
\end{align*}
since the number of primes in $[2^{l-1}y+1,2^ly]$ is $O(2^{l-1}y/\log y)$ by the Prime Number Theorem. Hence, we obtain $$\#T\ll \frac{yz(\log z)^{1/\phi(q)}}{\log y}.$$

\emph{Case when} $a\notin A_{\pm}$. Recall from Equation~\ref{Qdivs} that $Q$ is the product of the primes in the set
\begin{eqnarray*}
\{p\le y\mid p\neq p_0,p\not\equiv1,a\spmod{q}\}&\cup&\{t\le p\le y\mid p\neq p_0,p\equiv1\spmod{q}\}\\
&\cup&\{p\le yz/t\mid p\neq p_0,p\equiv a\spmod{q}\}.
\end{eqnarray*}

To bound $\#S$ from below, we note that an element of $[1,yz]$ of the form $pn$ where $p>yz/t$, $p\equiv a\pmod{q}$ and $n$ is a product of primes which are $1\pmod{q}$ must belong to $S$. Now we use the same strategy as we did for $\#T$ above and count such $pn$ where $p$ belongs to each of $[yz/t+1,2yz/t],[2yz/t+1,4yz/t],\dotsc,[2^{\floor{\log t}-1}yz/t+1,2^{\floor{\log t}}yz/t]$ separately. If $p>2^{l-1}yz/t$ it may be multiplied by any $n\in\mathscr{S}_q(t/2^l)$. Therefore we compute
\begin{eqnarray*}
\#S&\gg&\sum_{l\le\log t}\ \sum_{2^{l-1}yz/t<p\le 2^lyz/t}\ \sum_{n\in\mathscr{S}(t/2^l)}1\\
&\gg&\sum_{l\le\log t}\ \sum_{2^{l-1}yz/t<p\le 2^lyz/t}\frac{(t/2^l)(\log t)^{1/\phi(q)}}{\log t}\\
&\gg&\sum_{l\le\log t}\frac{2^{l-1}yz/t}{\log(yz/t)}\frac{(t/2^l)(\log t)^{1/\phi(q)}}{\log t}\\
&\gg&\frac{yz(\log t)^{1/\phi(q)}}{\log y}.
\end{eqnarray*}
To bound $\#T$ from above, we consider as before any $j\in T$. Suppose $j$ is not a multiple of $p_0$. Since all primes below $\sqrt{yz}\le yz/t$ which are $a\pmod{q}$ divide $Q$, $j$ is not divisible by two $a\pmod{q}$-primes below $y$, nor can it be divisible by both a prime above $y$ and an $a\pmod{q}$-prime below $y$. Now if $j$ is divisible by no prime above $y$, since it is not $a\pmod{q}$, it also cannot be the product of one $a\pmod{q}$-prime and any number of $1\pmod{q}$-primes---that is, it must be entirely a product of $1\pmod{q}$-primes less than $t$. Similarly, if $j=pn$ for some $p>y$, then $n$ must be a product of $1\pmod{q}$-primes less than $z$.

Hence, we narrow $j$ down to the following three sets: products of $1\pmod{q}$-primes less than $t$, numbers $pn$ where $p$ is a prime larger than $y$ and $n\in\mathscr{S}_q(z)$, and multiples of $p_0$. The last two can be bounded with the same calculations as in the case $a\in A_{\pm}$. The first set is trivially contained in all products of primes less than $t$, so can be bounded by $\Psi(yz,t)$. We compute
\[
\frac{\log(yz)}{\log t}\ge\frac{\log y}{\log t}=\frac{4\log y\log\log y}{\log y\log\log\log y}=\frac{4\log\log y}{\log\log\log y}
\]
and applying Lemma~\ref{smooth}, we find
\[
\Psi(yz,t)\le yz(\log t)^2\exp(-4\log\log y+o(\log\log y))\ll\frac{yz(\log t)^2}{(\log y)^4}\ll\frac{yz}{\log y}
\]
so that we can again conclude $\#T\ll \frac{yz(\log z)^{1/\phi(q)}}{\log y}$.

In summary, we have $$\#S\gg\frac{yz(\log(y,t))^{1/\phi(q)}}{\log y}\ \ \ \text{ and }\ \ \ \#T\ll\frac{yz(\log z)^{1/\phi(q)}}{\log y}.$$ We can now conclude the main result. In Case I, we have 
\[
k\gg\frac{\# S}{\# T}\gg \left(\frac{\log(y,t)}{\log z}\right)^{1/\phi(q)}.
\]
Since $\log z=o(\log(y,t))$ as shown earlier, this quantity becomes arbitrarily large.

In Case II, we have 
\[
k\gg\# S\frac{\log y E(X)}{(\log(y,t))^{1/\phi(q)}X}\gg\frac{yzE(X)}{X}.
\]
Thus, since $y\gg\frac{\log X}{D(X)},$
\[
k\gg\frac{zE(X)\log X}{XD(X)}.
\]
Since we chose $\frac{XD(X)}{E(X)\log X}=o(z),$ this quantity becomes arbitrarily large as well. Plugging in the expressions for $y, z,$ and $t$ given earlier in the proof yields the bound for $k$ in the theorem statement, so we are done. 
$\square$

\subsection{Proof of Corollary \ref{beatty}}
Consider the set
\[
\B_\alpha=\{\lfloor \alpha n \rfloor : n\in\mathbb{N}\}
\]
where $\alpha\in\R$ is irrational. In Theorem 5.4 of \cite{Banks}, Banks and Shparlinksi prove that if $\alpha$ is positive, irrational, and of finite type, then there is a positive constant $\kappa>0$ such that if $(q,a)=1$ and $1<q\leq X^\kappa$ non-exceptional, then
\[
\sum_{n\leq X, \lfloor \alpha n\rfloor\equiv a\bmod{q}}\Lambda(\lfloor \alpha n\rfloor)=\frac{1}{\alpha}\sum_{n\leq\lfloor\alpha X\rfloor,n\equiv a\bmod{q}}\Lambda(n)+O(X^{1-\kappa})
\]
where $\Lambda$ is the von Mangoldt function defined as usual by 
$$\Lambda(m)=\begin{cases}\log p&\text{if }m=p^i\\ 0&\text{otherwise.}\end{cases}$$
This implies that if $\alpha>0$ is irrational and of finite type, then
$$\pi_{\A}(X;q,a)\asymp\frac{X}{\phi(q)\log X}$$
(the implied constant depending on $\alpha$) where $\A=\{p\in\B_\alpha : p \mbox{ is prime}\}.$ Taking $E(X)=\frac{X}{\log X}$ and $D(X)$ to be the constant $1/\kappa$, we see that $\A$ is well-distributed, so we can apply Theorem \ref{Main} and obtain the desired result.

\begin{remark}
Almost every real number is of finite type, in particular all of the irrational algebraic numbers. Hence, for example, if $(a,q)=1$, then $\B_{\sqrt{2}}$ contains arbitrarily long strings of consecutive primes all equivalent to $a \bmod{q}.$
\end{remark}

\section{Leitmann Primes}

Leitmann showed in~\cite{Leit} that for a certain class of functions $f$ growing at a rate roughly between $X$ and $X^{\frac{12}{11}-\ve}$, the primes of the form $\floor{f(n)}_{n\in\N}$ are distributed between arithmetic progressions in accordance to a generalization of the Prime Number Theorem.  We wish to apply Theorem~\ref{main} to these primes. One problem that arises is that a growth rate of $X^{\frac{12}{11}-\ve}$ is too large, for the primes in the sequence $\floor{n^{\frac{12}{11}-\ve}}_{n\in\N}$ are too sparse for us to apply the Maier matrix method. 
However, we can still apply Leitmann's work if we assume a few further restrictions on $f$. We will specify these restrictions in Section 4.1.

\subsection{Definitions and Theorem Statement}

We will consider functions of the form $f(x)=xg(x)$ where $g$ is in the set $\G$, to be defined below. Functions in $\G$ grow much slower than any power of $x.$ For example, $g(x)=(\log\log x)^B$ lies in $\G$ for any $B>0.$
\begin{definition}
\label{goodg}
Let $\G$ be the set of all functions $g$ from $[c,\infty)\to [2,\infty)$ where $c\geq 1$ such that $g$ satisfies
\[
xg^{(i)}(x)=g^{(i-1)}(x)(\alpha_i-i+o(1))
\] 
for $i\in\{1,2,3\}$, where $\alpha_1>0$, $\alpha_2\geq 0$, $\alpha_1\neq\alpha_2,$ $\alpha_3\neq 3\alpha_1$, and $2\alpha_1+\alpha_3\neq 3\alpha_2$. We also require that $g$ is increasing, unbounded, and 
 satisfying the inequalities $2g'(x)+xg''(x)>0$ and 
\[
\log g(x)= o\left(\frac{\log\log x\log\log\log\log x}{\log\log\log x}\right).
\]
\end{definition}

\begin{definition}
We define $\Pf=\{\floor{f(n)}:n\in\N\}$.
\end{definition}

Our goal in this section is to prove that if $g\in\G$ and $f(x)=xg(x),$ then the set of primes in $\Pf$ is well-distributed. To achieve this, we simply apply Leitmann's results on the distribution of primes in $\Pf$ in arithmetic progressions. We restate Theorem \ref{leitsimple} here as Theorem \ref{leitprimes}.

\begin{theorem}
\label{leitprimes}
Let $g\in\G$ and $f(x)=xg(x)$. Then for all $q$ and $a$ with $(q,a)=1$, there exist arbitrarily long strings of consecutive primes in $\Pf$ that are all congruent to $a \pmod q$.
\end{theorem}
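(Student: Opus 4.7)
The plan is to deduce Theorem~\ref{leitprimes} from Theorem~\ref{main} by showing that the set of primes $\A=\{p\in\Pf:p\text{ prime}\}$ is well-distributed in the sense of Definition~2.3. The key input is Leitmann's analogue of the Prime Number Theorem for arithmetic progressions, which for admissible $f\in\F$ supplies
\[
\pi_\Pf(X;q,a)\asymp\frac{\vp(X)}{\phi(q)\log X}
\]
uniformly for $(q,a)=1$ with $q$ up to a small fixed power of $X$, where $\vp=f\inv$. This matches exactly the form of estimate required by Definition~2.3, with $E(X)=\vp(X)/\log X$ and $D(X)$ chosen to be a constant determined by Leitmann's uniformity range.

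The first step is to verify that $f(x)=xg(x)$ actually lies in Leitmann's admissible class $\F$. The conditions in Definition~\ref{goodg} on the ratios $xg^{(i)}(x)/g^{(i-1)}(x)$, the inequality $2g'(x)+xg''(x)>0$, and the prescribed non-degeneracy of the $\alpha_i$ are engineered precisely so that, after applying the product rule to $f(x)=xg(x)$, the resulting derivatives $f'$, $f''$, $f'''$ satisfy the regularity hypotheses Leitmann imposes. Moreover, since $xg'(x)=g(x)\,o(1)$ the function $g$ is slowly varying, so $f(x)=xg(x)$ grows only slightly faster than linearly, comfortably below Leitmann's upper threshold of $x^{12/11-\ve}$. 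I would record these verifications as a short preparatory lemma and then quote Leitmann's theorem.

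The heart of the argument is verifying the two growth conditions of Definition~2.3. The bound on $D(X)$ is trivial because $D(X)$ can be taken constant. The condition on $E(X)=\vp(X)/\log X$ reduces to
\[
\log\!\left(\tfrac{X}{\log X\cdot E(X)}\right)=\log\!\left(\tfrac{X}{\vp(X)}\right)=o\!\left(\tfrac{\log\log X\log\log\log\log X}{\log\log\log X}\right),
\]
and here I expect the main subtlety to lie. The identity $\vp(X)\,g(\vp(X))=X$ gives $X/\vp(X)=g(\vp(X))$, and since $g\ge 2$ forces $\vp(X)\le X$, monotonicity of $g$ yields $X/\vp(X)\le g(X)$. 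Thus the required estimate reduces to
\[
\log g(X)=o\!\left(\tfrac{\log\log X\log\log\log\log X}{\log\log\log X}\right),
\]
which is exactly the final growth hypothesis built into Definition~\ref{goodg}. With both conditions checked, $\A$ is well-distributed, and Theorem~\ref{main} immediately supplies arbitrarily long strings of consecutive primes in $\Pf$ all congruent to $a\pmod q$. The only place I anticipate nontrivial bookkeeping is extracting the clean form of Leitmann's uniformity statement and confirming that $\vp$ behaves as $X/g(X)$ up to the error absorbed by the $\asymp$.
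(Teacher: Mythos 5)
Your overall strategy matches the paper's: verify that the primes in $\Pf$ are well-distributed in the sense of Definition 2.3 and then invoke Theorem~\ref{main}. Your treatment of the $E(X)$ growth condition is correct and is exactly the paper's Lemma~\ref{inverse} in action: from $\vp(X)g(\vp(X))=X$ and $\vp(X)\le X$ one gets $X/\vp(X)=g(\vp(X))\le g(X)$, reducing the condition to $\log g(X)=o\!\left(\frac{\log\log X\log\log\log\log X}{\log\log\log X}\right)$, which is built into Definition~\ref{goodg}.

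The gap is your claim that ``$D(X)$ can be taken constant.'' Leitmann does not supply $\pi_{\Pf}(X;q,a)\asymp\frac{\vp(X)}{\phi(q)\log X}$ uniformly for $q$ up to a fixed power of $X$. What he supplies (Theorem 1.2 of \cite{Leit}) is an explicit bound on the remainder $r_f(X;q,a)$ in terms of the ordinary-prime remainder $r(y;q,a)$, and that transfer costs a factor of $\log X$: the key term is $\log X\max_{z\le y\le X}\vp'(y)|r(y;q,a)|$. Feeding in the zero-free-region bound from Maier's Lemma 2, $|r(y;q,a)|\ll\frac{y}{\phi(q)}\exp(-c_2D)$ for non-exceptional $q\le y^{1/D}$, and using $\vp'(y)\ll\vp(y)/y$, this term is of size $\frac{\log X\cdot\vp(X)}{\phi(q)}\exp(-c_2D)$. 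For it to be $o\!\left(\frac{\vp(X)}{\phi(q)\log X}\right)$ one needs $\exp(-c_2D)=o\!\left((\log X)^{-2}\right)$, which forces the paper's choice $D(X)=\frac{2+\eps}{c_2}\log\log X$; a constant $D$ leaves this error term larger than the main term. (Contrast Shiu's case, where the $\asymp$ for ordinary primes has no extra $\log X$ factor and constant $D$ suffices.) Your final conclusion survives since $\log D(X)\ll\log\log\log X$ still satisfies the well-distribution requirement, but the work of choosing $z,M,\delta$, controlling all of Leitmann's error terms, and deriving the non-constant $D(X)$ is the substantive content of the paper's argument and cannot be waved away as a constant level of distribution.
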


\subsection{Deduction of Theorem~\ref{leitprimes} from Theorem~\ref{main} and Leitmann's result}\ \\
First, we will show that we can indeed apply the results of Leitmann to functions in $\G$ by showing that if $g\in\G$ and $f(x)=xg(x),$ then $f\in\F$ in the notation of \cite{Leit}.

\begin{lemma}
If $g\in\G$, then $f(x)=xg(x)\in \F$.
\end{lemma}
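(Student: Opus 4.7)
The plan is to verify directly that $f(x)=xg(x)$ satisfies each defining condition of Leitmann's class $\F$. Looking at the conditions imposed on $\G$, it is clear that $\G$ was designed precisely so that the product $xg(x)$ inherits analogous asymptotic derivative relations with constants $\beta_i$ built from the $\alpha_i$. So the bulk of the work is a careful bookkeeping exercise with the product rule combined with substitution of the asymptotics for $g$.

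First, I would compute the derivatives
\[
f'(x)=g(x)+xg'(x),\qquad f''(x)=2g'(x)+xg''(x),\qquad f'''(x)=3g''(x)+xg'''(x),
\]
and then substitute $xg'(x)=g(x)(\alpha_1+o(1))$, $xg''(x)=g'(x)(\alpha_2-1+o(1))$, $xg'''(x)=g''(x)(\alpha_3-2+o(1))$ from the definition of $\G$. A short calculation gives, for instance, $f'(x)=g(x)(1+\alpha_1+o(1))$, and chaining the substitutions produces expressions of the shape $xf^{(i)}(x)=f^{(i-1)}(x)(\beta_i-i+1+o(1))$ where each $\beta_i$ is an explicit rational function of $\alpha_1,\alpha_2,\alpha_3$. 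One finds for example $\beta_1=1+\alpha_1$ and $\beta_2=\alpha_1(1+\alpha_2)/(1+\alpha_1)$, with a similarly explicit $\beta_3$.

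Next I would verify the qualitative requirements. Monotonicity of $f$ follows from $g$ increasing and positive; monotonicity of $f'$ follows from the assumption $2g'(x)+xg''(x)>0$, which is exactly $f''(x)>0$. The growth condition $\log f(x)=\log x+\log g(x)$ together with the bound $\log g(x)=o(\log\log x\log\log\log\log x/\log\log\log x)$ in the definition of $\G$ yields the growth regime ($f$ grows just above $x$) required for Leitmann's theorem to apply. Finally, the non-equality conditions on the $\beta_i$ that Leitmann demands translate into polynomial non-equalities in the $\alpha_i$; one checks that $\alpha_1\neq\alpha_2$, $\alpha_3\neq 3\alpha_1$, and $2\alpha_1+\alpha_3\neq 3\alpha_2$ in $\G$ are exactly what is needed to rule out the degenerate cases $\beta_1=\beta_2$, $\beta_1=\beta_3$ (or the analogue), and the corresponding third relation.

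The main obstacle is the last step: pinning down which polynomial identities among the $\alpha_i$ correspond to which forbidden relations among the $\beta_i$, and confirming that the three non-equality hypotheses written into the definition of $\G$ indeed cover every exclusion Leitmann requires. The derivative computations themselves are routine; once the $\beta_i$ are written out explicitly, the verification of the exclusion relations reduces to elementary algebra, but it is the step most susceptible to off-by-one or sign errors and is where I would concentrate the care.
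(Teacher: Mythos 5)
Your overall strategy matches the paper's: compute the derivatives of $f=xg$ via the product rule, substitute the asymptotic relations defining $\G$, and then verify the monotonicity, convexity, and growth conditions. However, your substitutions carry a systematic off-by-one error. The definition of $\G$ states $xg^{(i)}(x)=g^{(i-1)}(x)(\alpha_i-i+o(1))$, so the correct relations are $xg'(x)=g(x)(\alpha_1-1+o(1))$, $xg''(x)=g'(x)(\alpha_2-2+o(1))$, and $xg'''(x)=g''(x)(\alpha_3-3+o(1))$; you wrote $\alpha_1$, $\alpha_2-1$, $\alpha_3-2$ in the second factors. With the correct offsets, the identity $f^{(i)}(x)=ig^{(i-1)}(x)+xg^{(i)}(x)$ collapses to $f^{(i)}(x)=g^{(i-1)}(x)(\alpha_i+o(1))$, and the paper reads off from this that $xf^{(i)}(x)=f^{(i-1)}(x)(\alpha_i+o(1))$ with the \emph{same} constants $\alpha_i$. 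This is precisely why $\G$ is defined with the $-i$ offset and with its nondegeneracy hypotheses ($\alpha_1\neq\alpha_2$, $\alpha_3\neq3\alpha_1$, $2\alpha_1+\alpha_3\neq3\alpha_2$) written directly in the $\alpha_i$: there are no new rational-function constants $\beta_i$ to compute and no separate translation of the exclusion relations to perform. Your off-by-one version instead yields $\beta_1=1+\alpha_1$ and rational $\beta_i$, forcing the extra bookkeeping you correctly flag as the danger point but which the definition of $\G$ was arranged to avoid. You also leave the growth check vague, whereas the paper records both $x=o(f(x))$ (from $1=o(g(x))$) and $f(x)\ll x^{12/11-\ve}$ (from $g(x)\ll x^{1/11-\ve}$, a consequence of the $\log g$ bound in $\G$) as separate requirements of $\F$. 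Your instinct to worry about an arithmetic slip in the substitution step was sound; that slip is exactly what happened.
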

\begin{proof}
Notice that 
\[
f^{(i)}(x)=ig^{(i-1)}(x)+xg^{(i)}(x),
\] and substituting into $xg^{(i)}(x)=g^{(i-1)}(x)(\alpha_i-i+o(1))$ shows that 
\[
xf^{(i)}(x)=f^{(i-1)}(x)(\alpha_i+o(1)).
\]
Since $g,g'>0$, we have $f'(x)>0$, and since $2g'(x)+xg''(x)>0$, we have $f''(x)>0$. Since $1=o(g(x))$, we have $x=o(f(x))$, and since $g(x)\ll x^{1/11-\eps}$, we have $f(x)\ll x^{12/11-\eps}$. Therefore, $f(x)$ satisfies the conditions on functions in $\F$ in \cite{Leit}.
\end{proof}

Letting $\vp=f^{-1}$, we have the following lower bound on $\vp.$
\begin{lemma}
\label{inverse}
Let $g\in\G$, $f(x)=xg(x)$ and $\vp=f\inv$. Then $\vp(x)\gg\frac{x}{g(x)}.$
\end{lemma}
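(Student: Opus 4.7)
The plan is to invert the identity $y = f(x) = xg(x)$ directly and exploit monotonicity of $g$. Writing $x = \vp(y)$, the relation $y = xg(x)$ rearranges to
\[
\vp(y) = \frac{y}{g(\vp(y))},
\]
so the claim $\vp(y) \gg y/g(y)$ reduces to bounding $g(\vp(y))$ from above by a constant multiple of $g(y)$.

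To do this I would first note that since $g$ maps into $[2,\infty)$, we have $f(x) = xg(x) \geq 2x$ for all $x$ in the domain of $g$, which immediately yields $\vp(y) \leq y/2 \leq y$ for all sufficiently large $y$. Combined with the hypothesis that $g$ is increasing, this gives $g(\vp(y)) \leq g(y)$. Substituting into the displayed identity produces
\[
\vp(y) = \frac{y}{g(\vp(y))} \geq \frac{y}{g(y)},
\]
which is the desired bound (in fact with implied constant $1$).

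There is no real obstacle here: the only subtlety is checking that $\vp$ is well-defined and monotone on the appropriate range, which follows from $f'(x) = g(x) + xg'(x) > 0$ (using $g, g' > 0$ from Definition~\ref{goodg}), so that $f$ is strictly increasing and its inverse $\vp$ exists and is increasing on $[f(c), \infty)$. With those preliminaries in place, the three-line computation above completes the proof.
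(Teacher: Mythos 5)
Your proof is correct and follows essentially the same route as the paper's: both start from the identity $\vp(y)g(\vp(y)) = y$, establish $\vp(y) \le y$, and invoke the monotonicity of $g$ to bound $g(\vp(y)) \le g(y)$. The only cosmetic difference is that you derive $\vp(y) \le y$ directly from $g \ge 2$ (giving $f(x) \ge 2x$), whereas the paper appeals to $x = o(f(x))$ to get $\vp(x) < x$ for large $x$; your version is slightly cleaner and avoids the ``for large enough $x$'' qualifier.
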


\begin{proof}
Note that $f(\vp(x))=x$ and $f(\vp(x))=\vp(x)g(\vp(x)),$ so $\vp(x)g(\vp(x))=x.$ For large enough $x,$ since $x=o(f(x)),$ $\vp(x)<x.$ Hence, since $g$ is eventually increasing, $g(\vp(x))<g(x)$ for large enough $x.$ This implies that $\vp(x)=\frac{x}{g(\vp(x))}\gg \frac{x}{g(x)}$.
\end{proof}

For the remainder of this proof we will write $\pi_f(X;q,a)$ in place of $\pi_{\Pf}(X;q,a)$ and $r_f(X;q,a)$ for the remainder quantity
\[
\pi_f(X;q,a)-\frac1{\phi(q)}\int_{f(c)}^X\frac{\vp'(t)}{\log t}dt.
\]
\begin{proof}[Proof of Theorem~\ref{leitprimes}]

Since $\int_{f(c)}^X\frac{\vp'(t)}{\log t}dt$ can be approximated by $\frac{\vp(X)}{\log X}$ by Lemma 3.5 of~\cite{Leit}, we want to show
\[
\pi_f(X;q,a)\asymp \frac{1}{\phi(q)}\frac{\vp(X)}{\log X}
\]
 whenever $q^{D(X)}\leq X$ for some $D$. This would imply that $\Pf$ is well-distributed and Theorem~\ref{main} would then imply the desired result. It suffices to show that $r_f(X;q,a)=o\left(\frac{\vp(X)}{\phi(q)\log X}\right)$ for $q\leq X^{1/D(X)}.$
We define
\[
r(y;q,a)=\pi(y;q,a)-\frac1{\phi(q)}\int_2^y\frac{dt}{\log t}.
\]
By Theorem 1.2 of \cite{Leit}, we have
\begin{align*}
r_f(X;q,a)\ll &\frac{\vp(X)\log X}{\phi(q)\log z}\left(\frac1M+\del\right)+\frac{z}{\phi(q)}+1\\
&+\log X\max_{z\le y\le X}\vp'(y)|r(y;q,a)|+\frac{q^{3/2}M^{5/4}\vp(X)(\log X)^{9/4}}{\del z^{\eps_3}}.
\end{align*}
We can choose $z=\sqrt{X},\del^{-1}=M=\exp(d\sqrt{\log X})$ for some positive constant $d$. Now, since $q$ is a good modulus (as given by a fixed zero-free region), we have by the computation in Lemma 2 of~\cite{Maier} that $|r(y;q,a)|\ll\frac{y}{\phi(q)}\exp(-c_2D)$ whenever $q\le y^{1/D}$, where $c_2$ depends only on the same zero-free region.
 We choose $D(y)=\frac{2+\eps}{c_2}\log\log y$ for some positive $\eps$, giving $|r(y;q,a)|\ll\frac{y}{\phi(q)(\log y)^{2+\eps}}$. In addition, by Lemma 3.5 of~\cite{Leit}, we have $\vp'(y)\ll\frac{\vp(y)}{y}$.

We now compute that for $q\le X^{1/D(X)}$, 
\begin{align*}
\log X\max_{z\le y\le X}\vp'(y)|r(y;q,a)| &\ll \frac{\log X}{\phi(q)}\max_{z\le y\le X}\vp(y)\frac{1}{(\log y)^{2+\eps}}\\
&=\frac{\log X}{\phi(q)}\frac{\vp(X)}{(\log X)^{2+\eps}}=\frac1{\phi(q)}\frac{\vp(X)}{(\log X)^{1+\eps}}\\
&=o\left(\frac{\vp(X)}{\phi(q)\log X}\right).
\end{align*}
The other terms are $o\left(\frac{\vp(X)}{\phi(q)\log X}\right)$ as well, so we have
\[
\#\{p\in\mathscr{A}|p\le X, p\equiv a\bmod q\}\asymp \frac{1}{\phi(q)}E(X)
\]
for non-exceptional $q\le X^{1/D(X)}$. 

Now we just need to check that our choices $D(X)=\frac{2+\eps}{c}\log\log X$, $E(X)=\frac{\vp(X)}{\log X}$ satisfy our growth conditions. But we have 
\[
\log D(X)\ll\log\log\log X=o\left(\frac{\log\log X\log\log\log\log X}{\log\log\log X}\right)
\]
and, by Lemma~\ref{inverse},
\[
\log\left(\frac{X}{\log XE(X)}\right)=\log\left(\frac{X}{\vp(X)}\right)\ll\log g(X)=o\left(\frac{\log\log X\log\log\log\log X}{\log\log\log X}\right)
\]
by the definition of $g$. This completes the proof.
\end{proof}


\begin{thebibliography}{99}
\bibitem{Banks} W. D. Banks, I. E. Shparlinski, \textit{Prime numbers with Beatty sequences}, Colloquium Mathematicum, Vol. 115, No. 2 (2009) 147-157.
\bibitem{DeBr} N. G. De Bruijn, \textit{On the number of positive integers $\le x$ and free of prime factors $\ge y$}, Indag. Math. 13 (1951) 50-60.
\bibitem{Gran} A. Granville, \textit{Unexpected Irregularities in the Distribution of Prime Numbers}, Proceedings of the ICM, 1994.
\bibitem{Green} B. Green, T. Tao, \textit{The Primes Contain Arbitrarily Long Arithmetic Progressions}, Ann. Math, Vol. 167, No. 2 (2008) 481-547.
\bibitem{Gowers} W. T. Gowers, \textit{A New Proof of Szemer\'edi's Theorem}, GAFA, Geom. funct. anal. Vol. 11 (2001) 465 – 588.
\bibitem{Hardy} G. H. Hardy and E. M. Wright, \textit{An Introduction to the Theory of Numbers}, D. R. Heath-Brown and J. H. Silverman, eds., 6th ed., Oxford University Press, Oxford, 2008.
\bibitem{Khin} A. Y. Khinchin, \textit{Zur metrischen Theorie der diophantischen Approximationen}, Math. Z. 24 (1926), no. 4, 706-714.
\bibitem{Kra} B. Kra, \textit{The Green-Tao Theorem on Arithmetic Progressions in the Primes: An Ergodic Point of View}, Bull. Amer. Math. Soc., 43 (2006), 3--23.
\bibitem{Leit} D. Leitmann, \textit{The Distribution of Prime Numbers in Sequences of the Form $\floor{f(n)}$}, Proc. London Math. Soc. (3) 35 (1977) 448-462.
\bibitem{Maier} H. Maier, \textit{Chains of large gaps between consecutive primes}, Adv. Math. 39 (1981) 257-269.
\bibitem{Roth1} K. F. Roth, \textit{Rational approximations to algebraic numbers}, Mathematika 2 (1955), 1-20.
\bibitem{Roth2} K. F. Roth, \textit{Corrigendum to ``Rational approximations to algebraic numbers''}, Mathematika 2 (1955), 168.
\bibitem{Shiu} D. K. L. Shiu, \textit{Strings of Congruent Primes}, J. London Math. Soc. (2) 61 (2000) 359-373.
\bibitem{Thorne} F. Thorne, \textit{Extensions of Results on the Distribution of Primes}, ProQuest, UMI Dissertation Publishing (2011).
\bibitem{Van} J. G. van der Corput. \textit{\"Uber Summen von Primzahlen und Primzahlquadraten}, Math. Ann., 116 (1939), 1-50.


\end{thebibliography}
\end{document}